\newtheorem{theorem}{Theorem}
\newtheorem{lemma}{Lemma}
\begin{document}

\title{A re-entrant phase transition in the survival of secondary infections on networks}
\author{Sam Moore$^1$ \and Peter M\"{o}rters$^2$ \and Tim Rogers$^1$}

\date{
$^1$Department of Mathematical Sciences, University of Bath, Bath, BA2 7AY, UK \\
$^2$Mathematisches Institut, Universit\"{a}t zu K\"{o}ln, Weyertal 86-90, 50931 K\"{o}ln, Germany
}

\maketitle

\begin{abstract}
We study the dynamics of secondary infections on networks, in which only the individuals currently carrying a certain primary infection are susceptible to the secondary infection. In the limit of large sparse networks, the model is mapped to a branching process spreading in a random time-sensitive environment, determined by the dynamics of the underlying primary infection. When both epidemics follow the Susceptible-Infective-Recovered model, we show that in order to survive, it is necessary for the secondary infection to evolve on a timescale that is closely matched to that of the primary infection on which it depends.  
%\keywords{SIR Model \and Epidemics \and Superinfection \and Networks \and Branching Processes}
\end{abstract}

\section{Introduction}
Superinfections are a major cause of global mortality and morbidity. For example, the WHO estimates 15 million cases worldwide of Hepatitis D, which spreads only amongst carriers of Hepatitis B and greatly worsens their prognosis \cite{WHO2017}. There is a need, therefore, to develop a robust understanding of the conditions under which outbreaks of secondary infections are possible. Coevolving infections have been studied previously in the case of symbiotic/antagonistic relationships where infections mutually affect fitness \cite{Grassberger2016,Chen2013,Cai2015a}, however, relatively little is known in the case that one infection has a strict obligate relationship with another. 

In 2013, Court, Blythe and Allen \cite{Court2013} introduced a model of hierarchical infection referred to as the \emph{stacked contact process}. Their model concerns the fate of a population of coevolving hosts, spreading as a contact process on a lattice, and parasites, spreading as a contact process restricted to sites currently occupied by hosts. In epidemiological language, the contact processes of \cite{Court2013} correspond to coupled Susceptible-Infective-Susceptible (SIS) epidemics; empty lattice sites are interpreted as susceptible individuals, who may be infected by the primary (host) and then secondary (parasite) infections. Simulations of this model system revealed a surprising feature: the success of the parasites depends non-monotonically on the turnover rate of the host population. Specifically, for the parasite to succeed, it is necessary for the dynamics of the host population to be neither too fast, nor too slow. Later in \cite{Lanchier2014}, Lanchier and Zhang rigorously established the main features of the phase diagram for the stacked contact process. 

\begin{figure}
\centering
\includegraphics[width=0.55\linewidth]{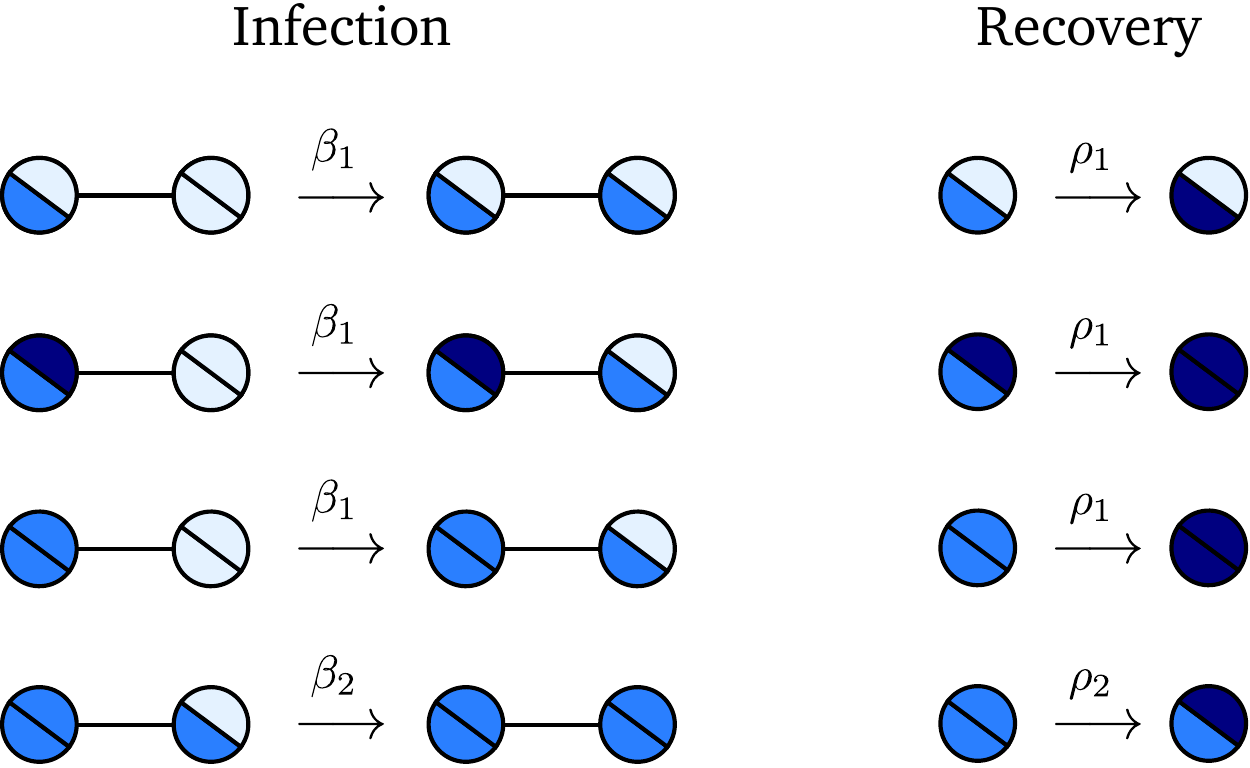}
\caption{Possible events and their rates in the network superinfection model. Circles represent nodes in the network, with the state of the primary (resp. secondary) infection shown by the colour of the lower-left (resp. top-right) sector; light denotes susceptible, midtone denotes infective, dark denotes recovered. }
\label{cartoon}
\end{figure}

At around the same time, Newman and Ferrario \cite{Newman2013} independently proposed a related model in the context of epidemic dynamics in social contact networks. They considered a pair of Susceptible-Infective-Recovered (SIR) epidemics with a strictly obligate relationship such that the secondary infection is only transmitted amongst those who have \emph{recovered} from the primary. In this formulation, the dynamics of the two diseases are completely separated in time, allowing for analytical treatment of the model using ``cavity method'' techniques which have been quite successful in the study of epidemics on networks (see, e.g. \cite{Newman2002,Rogers2015}). The introduction of network structure to the population in \cite{Newman2013} has the advantage of improving the relevance of the model for human epidemic dynamics, however, by separating the dynamics of the two diseases this model cannot display the curious interaction between infection timescales observed in \cite{Court2013,Lanchier2014}.

In this paper we study the dynamics of coevolving SIR superinfections in sparse contact networks. We consider a population of individuals occupying the vertices of an Erd\H{o}s-R\'enyi (ER) random graph with mean degree $c$. A primary infection spreads through the population with infective individuals passing the disease on to their neighbours with rate $\beta_1$, and recovering from the disease with rate $\rho_1$. Individuals who are carrying a \emph{live} primary infection may also play host to a secondary infection, which spreads and recovers with rates $\beta_2,\rho_2$ respectively. See Fig.~\ref{cartoon} for an illustration of the possible state transitions.  \textcolor{black}{As in \cite{Newman2013}, our secondary infection is restricted to spread on the subgraph of hosts infected with the primary, however, differently from that paper we consider the more complex case in which this subgraph is evolving in time due to the recovery of primary infections. }

As well as arguably improving the realism of the model, moving from lattice to network topologies allows us access to a rigorous branching process approximation --- an approach that has previously enjoyed success in approximating SIR-type models in large populations, as seen for instance in \cite{Bartoszynski,SarabjeetSingh2014}. By coupling the dynamics of the secondary infection to those of a multi-type branching process, we will be able to characterise the phase diagram of the system. Note that, in the context of large finite networks, when we discuss \textit{survival} of the infection we mean an asymptotically positive proportion of vertices become infected at some  point  in time.
\begin{figure}[ht]
\centering
\includegraphics[width=0.55\linewidth, trim=120 290 120 240]{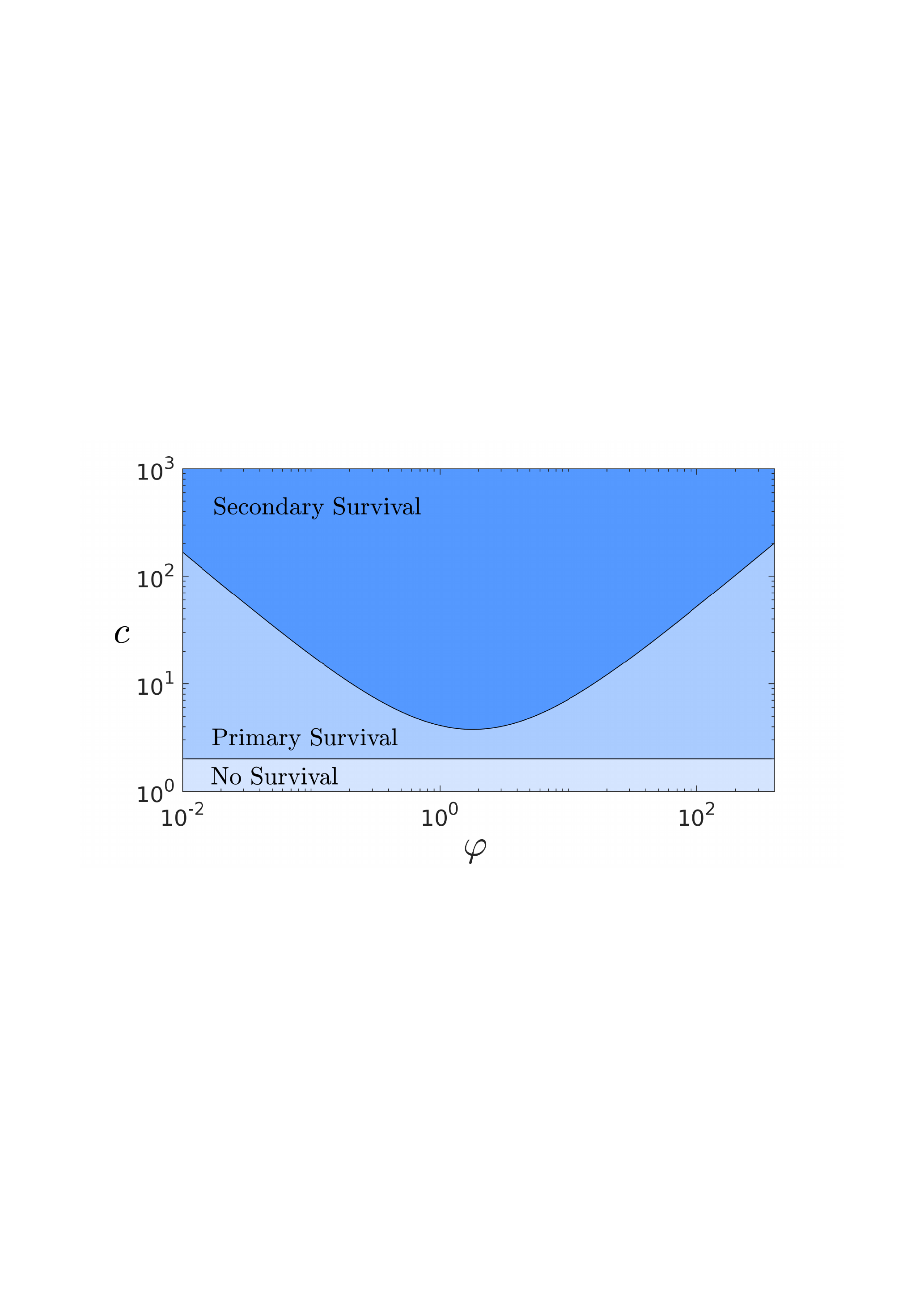}
\caption{Phase diagram of the superinfection network model for fixed $\beta_1/\rho_1=\beta_2/\rho_2$, shown as a function of the relative timescale $\varphi=\beta_1/\beta_2$ of the infections and the connectivity $c$ of the network. The secondary infection survives with positive probability only in a convex region whose boundary is characterised in our Theorem \ref{th1}. In this log-log plot, the asymptotic slope of the boundary is $-1$ for small $\varphi$, and $1$ for large $\varphi$, as implied by the scaling laws in \eqref{SL}.}
\label{fig:regions}
\end{figure}

The success of the primary infection is controlled by the connectivity (mean node degree) $c$ of the network, and the ratio of the infection and recovery rates $\alpha:=\beta_1/\rho_1$. This parameter is well understood as the basis of the classical single infection process; for fixed $\alpha$ there exists a critical value of $c$ above which the infection survives with positive probability and at or below which we have certain extinction, see e.g. \cite{Henkel2008}. Note that simultaneously adjusting $\beta_1$ and $\rho_1$ by a multiplicative factor will change the timescale of the disease dynamics, but will not alter the probability of survival since $\alpha$ is unchanged. 

Inspired by the results of \cite{Court2013,Lanchier2014}, we are interested here in the behaviour possible when the primary and secondary diseases are similarly virulent, but may differ in their in timescales. To this end, we \textcolor{black}{will mainly} concentrate on the case that $\beta_2/\rho_2=\alpha$ also. \textcolor{black}{We have made this choice only for simplicity of presentation; the more general case is in fact covered by Lemma 2, and the results are not qualitatively different in other cases.} Three parameters then describe success of secondary infection: the connectivity of the underlying graph, $ c $; the ratio between rates of spread and recovery,  $ \alpha  $; and, crucially, the relative timescales of the two infections, $ \varphi:=\beta_1/\beta_2  $. If $\varphi\gg1$ then the dynamics of the primary infection are very much faster than those of the secondary; if $\varphi\ll1$ then they are much slower. 

In order for the secondary infection to survive it is perhaps intuitive that it must progress at a rate fast enough compared to the primary infection, else the primary infection will have itself recovered and subsequently ended the secondary infection before it has a chance to spread. Perhaps more surprisingly however we shall also show that the secondary infection should not act too quickly as this too compromises survival potential. Our characterisation of the survival of the secondary infection is illustrated in Fig.~\ref{fig:regions} and summarised by our main result:

\begin{theorem}\label{th1}
For all $\alpha,\varphi>0$ there exists a critical connectivity $c^\star$ such that, in the limit of large network size, for $c<c^\star$ the secondary infection dies out with probability one, and for $c>c^\star$ it survives with positive probability. 

Furthermore, the critical connectivity $c^\star$ is found to be the smallest positive solution of the implicit equation 
\begin{equation}
c^\star = \frac{(1+\alpha+\alpha\varphi)(1+\varphi+2\alpha+\alpha\varphi)}{\varphi} \frac{_0F_1 (2+\varphi+(1+\varphi)/\alpha;-c^\star\varphi/\alpha^2)}{_0F_1 (3+\varphi+(1+\varphi)/\alpha;-c^\star\varphi/\alpha^2)}\,,
\label{eig0}
\end{equation}
where $_0F_1(a;z)=\sum_k\textcolor{black}{\frac{z^k}{k!(a)_k}}$ is a hypergeometric function. In particular, for large and small $\varphi$ we have the scaling behaviour
\begin{equation}
c^\star =\mathrm{\Theta}(\varphi)\quad\text{for }\,\,\varphi\to\infty\,,\quad c^\star =\mathrm{\Theta}(1/\varphi)\quad\text{for }\,\,\varphi\to0\,.
\label{SL}
\end{equation}
\end{theorem}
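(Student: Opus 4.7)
My proof proceeds in three stages: reduction to a multi-type branching process, a survival criterion, and an explicit spectral computation. For the first stage, in the sparse regime the local neighborhood of a typical vertex in $G(n,c/n)$ converges to a Galton--Watson tree with Poisson($c$) offspring distribution. Coupling the joint SIR/SIR dynamics to this local limit, the event that the secondary infection reaches a positive proportion of vertices is asymptotically equivalent to non-extinction of a continuous-time multi-type branching process $\mathcal B$ whose particles are the secondary-infected tree-vertices. Each particle $v$ of $\mathcal B$ carries a single real type variable---its remaining primary-infected lifetime at the moment it acquires the secondary infection, which by the memoryless property is Exp($\rho_1$)-distributed. The number and types of $v$'s offspring are determined by explicit races between the four exponential clocks (rates $\beta_1,\rho_1,\beta_2,\rho_2$) along each of $v$'s Poisson($c$) downstream tree-edges, accounting for the two possibilities that a child either already carries the primary infection or must first acquire it from $v$.

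\textbf{Survival criterion and explicit solution.} Integrating out these races and the Poisson offspring count produces a positive integral operator $K_c$ on functions of the age variable which is the mean offspring kernel of $\mathcal B$. Standard Perron--Frobenius/Kesten--Stigum theory for positive multi-type branching processes then gives that $\mathcal B$ survives with positive probability iff the spectral radius $\lambda(K_c)$ exceeds $1$; strict monotonicity of $\lambda(K_c)$ in $c$ defines a unique critical $c^\star$ by $\lambda(K_{c^\star})=1$. The eigenvalue equation $K_{c^\star}\phi=\phi$ is an integral equation whose kernel is polynomial-times-exponential in the age variable; differentiating twice converts it into a linear second-order ODE with polynomial coefficients whose index and scaling parameters depend on $\alpha$ and $\varphi$. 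This ODE is of confluent-hypergeometric type, $_0F_1$ is its regular solution at the origin, and integrability at infinity selects a unique eigenfunction up to scalar. Substituting back into the integrated eigenvalue condition produces the implicit equation \eqref{eig0}.

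\textbf{Scaling and main obstacle.} The asymptotics \eqref{SL} follow by balancing the explicit prefactor $(1+\alpha+\alpha\varphi)(1+\varphi+2\alpha+\alpha\varphi)/\varphi$, which is itself $\Theta(\varphi)$ as $\varphi\to\infty$ and $\Theta(1/\varphi)$ as $\varphi\to 0$, against the ratio of $_0F_1$s, which remains $\Theta(1)$ under the self-consistent choice of $c^\star$ in each limit (via the large-order Bessel-type asymptotics of $_0F_1$ for $\varphi\to\infty$ and by direct evaluation at a finite argument for $\varphi\to 0$). The chief obstacle is justifying that a \emph{single} real type variable suffices, rather than an a priori infinite-dimensional state encoding the entire primary-infection history of all of $v$'s neighbors. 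The memoryless property of the exponential clocks along tree edges allows the neighbor-state information to be integrated out in closed form; making this collapse rigorous, and then identifying the resulting spectral problem literally as a confluent-hypergeometric ODE so that $c^\star$ is characterised by the compact formula \eqref{eig0}, is where the technical effort lies.
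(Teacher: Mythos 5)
Your overall architecture---reduce to a multi-type branching process on a one-dimensional type space, characterise survival by the spectral radius of the mean offspring operator, and solve the eigenvalue problem in terms of ${}_0F_1$---is the same as the paper's. But your choice of type variable is wrong, and it sits exactly at the step you yourself identify as the chief obstacle. You take the type of a particle to be its \emph{remaining} primary-infected lifetime at the moment it acquires the secondary; by the memoryless property this is Exp$(\rho_1)$ independently of the particle's history, so it is an i.i.d.\ mark carrying no information, and the process would degenerate to a single-type branching process. What must be recorded instead is the \emph{elapsed} time $t$ between acquisition of the primary and of the secondary infection: this is what determines the joint law of the neighbours' states when the secondary arrives (in particular whether the primary has already been transmitted along a given edge and, if so, how long ago, which governs whether that neighbour is still primary-infective when the secondary could reach it). Conditional on your remaining-lifetime variable the neighbour states are not determined---their law still depends on $t$, which is independent of it---so the process is not Markov in your type and the claimed collapse does not go through. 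With the correct type the kernel $\mu(t'|t)$ is an explicit piecewise sum of exponentials, and your route of converting the eigenvalue equation into a confluent-hypergeometric ODE would then be a legitimate alternative to the paper's power-series ansatz in $e^{-\rho_1 t}$.

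Two further steps are glossed over. First, ``standard Perron--Frobenius/Kesten--Stigum theory'' does not apply off the shelf: the type space is the non-compact interval $[0,\infty)$ and the kernel is strictly but not uniformly positive, so the paper must truncate to $[0,T]$, apply Harris's survival theorem there, and transfer the principal eigenvalue back via an operator-perturbation argument. Second, your scaling argument is circular as stated: the argument of the ${}_0F_1$'s is $-c^\star\varphi/\alpha^2$, which under $c^\star=\Theta(\varphi)$ is of order $\varphi^2$, the same order as the square of the first Bessel zero $j_\gamma^2$ with $\gamma=(1+\varphi)(1+1/\alpha)$; whether the hypergeometric ratio stays bounded away from $0$ and $\infty$ in this joint large-order, large-argument regime is genuinely delicate. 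The paper resolves it non-circularly with explicit two-sided bounds on the ratio (via inequalities of Ifantis--Siafarikas and Ismail--Muldoon) that sandwich $c^\star$ between functions $l(\varphi)$ and $u(\varphi)$ each having the claimed $\Theta$ behaviour.
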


\textcolor{black}{Here we have made use of ``big theta'' notation, defined as follows: $f(x)=\mathrm{\Theta}(g(x))$ as $x\to \infty$ (resp. $x\to 0$) if there exist positive constants $L$ and $U$ and $X$ such that $\forall x>X$ (resp. $x<X$) we have $$L g(x)< f(x) <U g(x)\,.$$}

The remainder of the article is organised as follows: in the next section we map the early dynamics of our network superinfection model to a certain multitype branching process; in Section 3 we compute the long-time behaviour of this process and thus give the proof of Theorem \ref{th1}; Section 4 is for discussion, including illustrative numerical results.  

\section{Branching process description}

\subsection{Primary infection}

We begin by recapping the standard branching process approximation to the dynamics of an infection spreading on an Erd\H{o}s-R\'{e}nyi random graph \cite{Foundation1956,Bartoszynski}. Heuristically, the method relies on the fact that for fixed connectivity, short cycles become asymptotically rare in the limit of large graphs, meaning that during the crucial early dynamics of the infection, each susceptible node may have at most one infective neighbour. 

Let us consider the infection spread  as generational; the $ n^{th} $ generation being the individuals at graph distance $ n $ from the seed vertex that gain the infection at any point in time. In this way the primary infection is modelled as a simple Galton Watson process described by the quantity $ Z_{n } $, giving the number of individuals in the $ n^{th } $ generation. The \textit{offspring distribution} describes the probability $p_i$ for an individual to pass the infection on to $i$ others in the next generation. If the offspring distribution has mean $\mu$, the expected number of infected individuals at distance $n$ from the seed is then given by $\mathbb{E}Z_{n}=\mu^n$. If $\mu\leq1$ then the branching process will almost surely go extinct after finitely many generations; if $\mu>1$ then it may survive; survival of the branching model being simply characterised by the size of the $ n^{th} $ generation being non zero for all $ n $. 

In the network SIR model, the number of offspring is equated with the number of neighbours (other than the single infected `parent') that an infective node succeeds in transmitting the disease to before it recovers. There are several sources of randomness: the number of neighbours to potentially infect, the recovery time, and times of infection. We note that whilst the fates of the neighbours of an infected node are not independent (they are jointly exposed to the random time to recovery of the parent) the mean of the offspring distribution can be found simply by multiplying the probability $\alpha/(1+\alpha)$ to infect any given neighbour before recovery, with the expected number of neighbours to infect, $c$. From standard branching process theory, we thus deduce that in the limit of large networks the primary infection will have a non-zero chance of survival if and only if $\mu>1$, that is, if $c>1+1/\alpha$. 

For finite graphs, the coupling between the random graph and branching process model is of course only local. Suppose in a population of size $ N $, in generation $  n $ we have $ m  $ infected individuals, so $ Z_{n }= m  $ in the branching model. We then have errors coming from the fact that each infective may only be connected to at most $ N-m  $ susceptibles (not constant for each generation) as well as the fact that each of these may not be unique (and so children in the subsequent generation of the branching process may not be unique). However when $ m = o\sqrt{n}$ the random graph may be coupled to the branching model with high probability; for a proof of this see \cite{Durrett}. 

\subsection{Secondary infection}
\label{branch2}
\textcolor{black}{For the primary infection, to determine if the probability of survival is positive only requires} knowledge of two quantities: the expected number of susceptible neighbours an individual has, and the chance any one of those will gain the infection. The difficulty with modelling the secondary infection is that the first of these is dynamic, since the subgraph composed of individuals currently carrying the primary infection changes with time. We account for this additional complexity by introducing a \emph{type} parameter $t$, which specifies the time elapsed between the primary and secondary infections. Specifically, if an individual acquires the primary infection at time $t_1$ and the secondary at time $t_2$, then they are said to have type $t=t_2-t_1$. 

\textcolor{black}{It is clear that at least this much information is required to predict the potential of an individual to transmit the secondary infection to new hosts; for example, the larger $t$, the more likely an individual is to pass on the primary infection long before it passes on the secondary, by which time the primary infection in the new host may have recovered. We will see in Section 3.1 that in fact knowledge of $t$ is enough to completely characterise the distribution of the number and timing of new secondary infections arising from an individual.} The progress of the secondary infection is then mapped to that of a multi-type branching process with type space $\mathbb{T} =[0,\infty)$. 

Where previously survival was predicted by just the mean number of offspring, now the picture is more complicated, and we are required to compute the \emph{intensity of production} of all types of offspring resulting from all types of parents. This information is captured in the kernel $\mu(t'|t)$, which is defined by the property that the expected number offspring with types in the interval $[a,b]$ coming from a parent of type $t$ is given by the integral of $\mu(t'|t)$ over $t'\in [a,b]$. This kernel defines a linear operator with the action
\begin{equation}
M[\psi](t')=\int\mu(t'|t)\psi(t)\text{d}t\,.
\label{Mdef}
\end{equation}
In words, $M[\psi]$ describes the expected size and composition of the population of offspring arising from a population of parents with types given by $\psi$. 

We say that a kernel $\mu$ defined over an interval $I$ is: strictly positive if $\forall t,t'\in I$ we have $\mu(t'|t)>0$; uniformly positive if $\exists\, \varepsilon>0$ such that $\forall t,t'\in I, \mu(t'|t)>\varepsilon$; integrable if $\iint \mu(t'|t)\textrm{d} t\,\textrm{d} t'<\infty$. We assume that $M$ can be defined as a linear operator $M\colon \mathcal{C}_{\text{b}}(\bar{\mathbb{T}}) \to \mathcal{C}_{\text{b}}(\bar{\mathbb{T}})$ over the space of continuous bounded functions on the compact interval $\bar{\mathbb T}=[0,\infty]$ equipped with the supremum norm, and in particular that $\mu$ has vanishing mass as $t$ goes to infinity. One then has the following general result:
\begin{lemma}\label{Z}
Let $\{Z_n\}$ be a multi-type branching process on $\mathbb{T} =[0,\infty)$ with production operator $M$ arising 
as above from a kernel $\mu$ that is strictly positive on $\mathbb{T}$, integrable, and continuous in both arguments, then 
\begin{enumerate}
\item There exists an eigenvalue $\lambda >0$ equal to the spectral radius of $M$, moreover, this is the only eigenvalue corresponding to a non-negative eigenfunction   
\item If $ \lambda < 1 $ then the process goes extinct in finite time with probability one
\item If $ \lambda > 1 $ then the process survives with positive probability.
\end{enumerate}
\end{lemma}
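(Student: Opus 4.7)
The proof naturally splits according to the three statements: part (1) is a Krein--Rutman theorem for positive compact operators, while parts (2) and (3) are the standard subcritical/supercritical dichotomy for multi-type branching processes whose mean operator satisfies a Perron--Frobenius-type conclusion. My plan is to isolate these two blocks and handle them separately.

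For part (1), I would verify the hypotheses of the Krein--Rutman theorem applied to $M$ on the ordered Banach space $\mathcal{C}_b(\bar{\mathbb{T}})$ with positive cone the non-negative functions. First, compactness: integrability and continuity of $\mu$, together with the vanishing mass assumption as $t\to\infty$, show that $M$ maps bounded sets into uniformly bounded and equicontinuous families (via dominated convergence applied to $|\mu(t'|t) - \mu(t''|t)|$), so Arzel\`a--Ascoli gives compactness. Second, strong positivity: since $\mu(t'|t) > 0$ everywhere, $M$ sends any non-zero, non-negative function to a strictly positive function. Krein--Rutman then yields an eigenvalue $\lambda$ equal to the spectral radius, with a strictly positive eigenfunction $h$, and uniqueness of this eigenfunction (up to scaling) among non-negative eigenfunctions; compactness combined with strong positivity ensures $\lambda > 0$.

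For part (2), I would exploit the eigenfunction $h$ from part (1) to construct a martingale. Pairing the measure-valued process $Z_n$ with $h$ via $\langle h, Z_n\rangle = \int h(t)\, Z_n(\mathrm{d}t)$, the branching property and the identity $Mh = \lambda h$ give $\mathbb{E}[\langle h, Z_n\rangle \mid Z_{n-1}] = \langle Mh, Z_{n-1}\rangle = \lambda \langle h, Z_{n-1}\rangle$, so $W_n := \lambda^{-n}\langle h, Z_n\rangle$ is a non-negative martingale. When $\lambda < 1$, we get $\mathbb{E}\langle h, Z_n\rangle = \lambda^n \langle h, Z_0\rangle \to 0$. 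Because $h$ is bounded below by a positive constant on any compact type set visited by the process (and one can argue separately that the supports remain effectively compact), Markov's inequality applied to the event $\{\lvert Z_n\rvert \geq 1\}$ together with Borel--Cantelli gives extinction in finite time almost surely.

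The main obstacle is part (3), where the lower bound on survival cannot be read off from first moments alone. The approach I would take is to analyse the offspring generating functional $F[s](t) = \mathbb{E}\bigl[\prod_{i} s(t_i') \,\big|\, \text{parent of type } t\bigr]$, whose Fr\'echet derivative at $s\equiv 1$ is precisely $M$. The non-extinction probability started from type $t$ is $1 - q(t)$ where $q$ solves $F[q] = q$; one must produce a fixed point $q^* \not\equiv 1$. When $\lambda > 1$, the constant function $1$ is a linearly unstable fixed point of $F$ (since $M$ has a positive eigenvalue exceeding $1$ with a strictly positive eigenfunction), and the standard monotone iteration $q_{n+1} = F[q_n]$ starting from $q_0 = 1 - \varepsilon h / \|h\|_\infty$ can be shown to decrease to a non-trivial fixed point $q^* < 1$ using monotonicity and concavity properties of generating functionals together with the compactness of $M$. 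I expect this step to be the main technical obstacle, and rather than reprove it under minimal assumptions I would appeal to the general theory of multi-type branching processes with compact, strongly positive mean operator as developed in the books of Mode, Harris, or in Jagers' framework for general type spaces.
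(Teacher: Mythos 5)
Your part (1) matches the paper's argument (Arzel\`a--Ascoli compactness plus Krein--Rutman), and your part (2) reaches the right conclusion, but both (2) and (3) contain the same unaddressed difficulty: the type space $[0,\infty)$ is not compact and the kernel decays, so the relevant eigenfunction $h$ vanishes at infinity and is \emph{not} bounded below by a positive constant, and the kernel is strictly positive but \emph{not} uniformly positive. For part (2) this means your Markov/Borel--Cantelli step ("$h$ is bounded below on any compact type set visited by the process") does not close without an argument you only gesture at; the paper avoids this entirely by working with the first moment of the total generation size, noting $\|M^n\|\to 0$ when the spectral radius is below one, so $\prb(|Z_n|\geq 1)\leq \exn|Z_n|\to 0$ and extinction follows by monotonicity. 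You should simplify to that.

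The genuine gap is in part (3). You defer to "the general theory" of Harris, Mode or Jagers, but Harris's survival theorem for continuous type spaces assumes a \emph{uniformly} positive kernel, which fails here precisely because $\mathbb{T}$ is unbounded --- this is the one nontrivial obstacle the lemma has to overcome, and neither your monotone iteration sketch (which needs uniform control of the second-order terms of the generating functional near $s\equiv 1$, again problematic where $h$ is small) nor the citation resolves it. The paper's fix is a truncation-and-perturbation argument: delete all individuals of type greater than $T$ together with their descendants, obtaining a genuine branching process $\{Z_n^{(T)}\}$ on $[0,T]$ whose kernel \emph{is} uniformly positive, so Harris applies there; then show via continuity and integrability of $\mu$ that $\|M-\tilde M^{(T)}\|<\varepsilon$ for $T$ large, invoke Kato's perturbation theory for the isolated principal eigenvalue to get $\lambda^{(T)}\to\lambda$, choose $T$ with $\lambda^{(T)}>1$, and conclude survival of $\{Z_n\}$ from the pathwise domination $Z_n\geq Z_n^{(T)}$. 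Without some version of this truncation (or an equivalent compactification device), your part (3) does not go through.
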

\begin{proof}

\begin{enumerate}
\item[]
\item For the first part, we observe that the properties of $\mu$ imply the compactness of $M$ on $ \mathcal{C}_{\text{b}}(\bar{\mathbb{T}})$ by virtue of the Arz\'ela-Ascoli theorem~\cite[IV.6.7]{DS}.
The Krein-Rutman theorem \cite[Th 1.3 $\S$3.2]{pinsky1995positive} then gives that the spectral radius is a positive eigenvalue and by  \cite[Theorem 7.3]{Anselone1974} the only nonzero eigenvalue with a non-negative eigenfunction.

\item We simply observe that if $ \lambda < 1 $ then $\|M^n[\psi]\|\to0$ for all $\psi$, hence we have convergence of the expected generation size to zero (i.e. $\mathbb{E}Z_n\to0$), which implies extinction in finite time with probability one.

\item We make use of results of Harris \cite[ $ \S $3]{Harris1989} who proved positive survival probability for multi-type branching processes with a uniformly positive kernel. Our kernel $\mu$ is not uniformly positive, but we are able to couple to such a process by restricting to a bounded type space $[0,T]$. Choosing $T$ large enough forces close agreement in the maximum eigenvalues of the corresponding production operators.

Let us start by considering the process \smash{$\{Z_n^{_{(T)}}\}$} obtained from $\{Z_n\}$ by removing all
individuals of type greater than $T$ along with their descendants. The law of \smash{$\{Z_n^{_{(T)}}\}$} is that of a multitype branching process on $[0,T]$ with operator 
$  M^{(T)} :\mathcal{C}_{\text{b}}[0,T] \to \mathcal{C}_{\text{b}}[0,T]  $
defined by 
\begin{align}
M^{(T)}[\psi](t')=\int_{[0,T]}\mu(t'|t)\psi(t)\text{d}t\,.
\end{align}
Note that \smash{$\inf_{t,t'\in[0,T]}\mu(t'|t)>0$} and so the kernel is strictly positive and we refer to  
\cite[$ \S $3]{Harris1989}  to prove both the existence of a positive top eigenvalue $\lambda^{{(T)}}$ of $ M^{(T)} $ strictly greater in magnitude than all others and survival of the process  \smash{$\{Z_n^{_{(T)}}\}$} with positive probability if $\lambda^{(T)}>1$. 

To show closeness of the eigenvalues $\lambda^{(T)}$ and $ \lambda $ we extend the operator $ M^{(T)} $ to $  \tilde{M}^{{(T)}} :\mathcal{C}_{\text{b}}(\bar{\mathbb{T}}) \to \mathcal{C}_{\text{b}}(\bar{\mathbb{T}})$ defined by 
\begin{equation}
 \tilde{M}^{{(T)}}[\psi](t') = \int_{[0,T]}\mu(t'\wedge T|t)\,\psi(t)\,\text{d}t
\end{equation}
Note that operators $ M^{(T)} $ and $ \tilde{M}^{(T)} $ share eigenvalues so we may equivalently consider the top eigenvalue \smash{$ \tilde{\lambda}^{(T)}$} of \smash{$ \tilde{M}^{(T)} $}.
Since $ \mu $ is continuous and integrable, for all $ \varepsilon>0 $ there exists~$T$ such that
\begin{align}\label{eps}
\big\| M^{}-\tilde{M}^{(T)} \big\| < \varepsilon\,,
\end{align}
where $\|\cdots\|$ is the operator norm induced by the infinity norm on $\mathcal{C}_{\text{b}}(\bar{\mathbb{T}})$.

We have already observed that the principal eigenvalue $\lambda$ of $M$ can be separated from the rest of the spectrum by a closed curve. Hence, by Kato \cite[IV $ \S $ 3.5 ]{kato2013perturbation}, we have that 
\smash{$|\lambda - \lambda^{{(T)}}|$} goes to zero with $\|M^{}-\tilde{M}^{(T)} \|$. 
In particular, if $\lambda>1$ it follows from \eqref{eps} that we can choose $ T $ such that
\begin{align}
\big| \lambda^{(T)}-\lambda\big| < \lambda-1 ,
\end{align}
and hence $\lambda^{(T)}>1$ and $\{Z_n^{_{(T)}}\}$ survives with positive probability. The untrimmed process satisfies $Z_n\geq Z_n^{_{(T)}}$ and hence also survives with positive probability.\\[-10mm]
\end{enumerate}\raggedleft$\square$
\end{proof}

To prove our main result about the survival of the secondary infection, we must explicitly identify the operator $M$, analyse its spectrum, and compute the scaling behaviour when the timescales of the infections are well separated. 

\section{Survival of the secondary infection}

\subsection{Production kernel}

The form of the kernel $ \mu(t'|t) $ may be found by considering when a type $t$ parent will have a type $t'$ offspring. For this to happen, the parent must pass on the primary infection at some time $ s $ (measured from the moment they first acquired it), and then pass on the secondary infection at time $s+t'$. The primary and secondary infections in the parent, and the primary infection in the child, must all survive long enough for this process to complete. We find it useful to break the calculation into two cases, depending on whether the primary infection is transmitted before or after the parent acquires the secondary; that is, depending on the order of $s$ and $t$. 

The case $s<t$ is illustrated in Fig.~\ref{fig:types}(i). To achieve a type $t'$ offspring in this case: the transmission time $s>0$ of the primary must occur before $t$ but after $t-t'$ (which may be negative); the secondary must be transmitted $s+t'-t$ time units after it was acquired in the parent at time $t$; the primary infection in the parent must not recover in the time between $s$ and $t$; and none of the three active infections may recover in the window of time between $t$ and $s+t'$. Putting these contributions together, we reach
\begin{align*}
\mu(t'|t, s <t)=c\int_{(t-t')_+}^{t}\big[\beta_{1}e^{-\beta_{1}s}\big]\big[\beta_{2}e^{-\beta_{2}(t'-t+s)}\big]\big[e^{-\rho_{1}(t-s)}\big]\big[e^{-(2\rho_{1}+\rho_{2})(t'-t+s)}\big]\textrm{d}s,
\end{align*}
where $(\cdots)_+$ denotes the positive part, and the prefactor of $c$ comes from the expected number of neighbours to which the infection may be transmitted. 

Similarly, the case $s\geq t$ is illustrated Fig.~\ref{fig:types}(ii). Here transmission of the primary may occur any time after $t$, with the secondary being transmitted $t'$ time units later. Both infections in the parent must survive until time $s$, after which all three infections must survive for at least $t'$ time units. The resulting expression is 
\begin{align*}
\mu(t'|t, s\geq t)=c\int_{t}^{\infty}[\beta_{1}e^{-\beta_{1}s}\big]\big[\beta_{2}e^{-\beta_{2}t'}\big]\big[e^{-(\rho_{1}+\rho_{2})(s-t)}\big]\big[e^{-(2\rho_{1}+\rho_{2})t'}\big]\textrm{d}s\,.
\end{align*}

\begin{figure}
\centering
\includegraphics[width=0.7\textwidth]{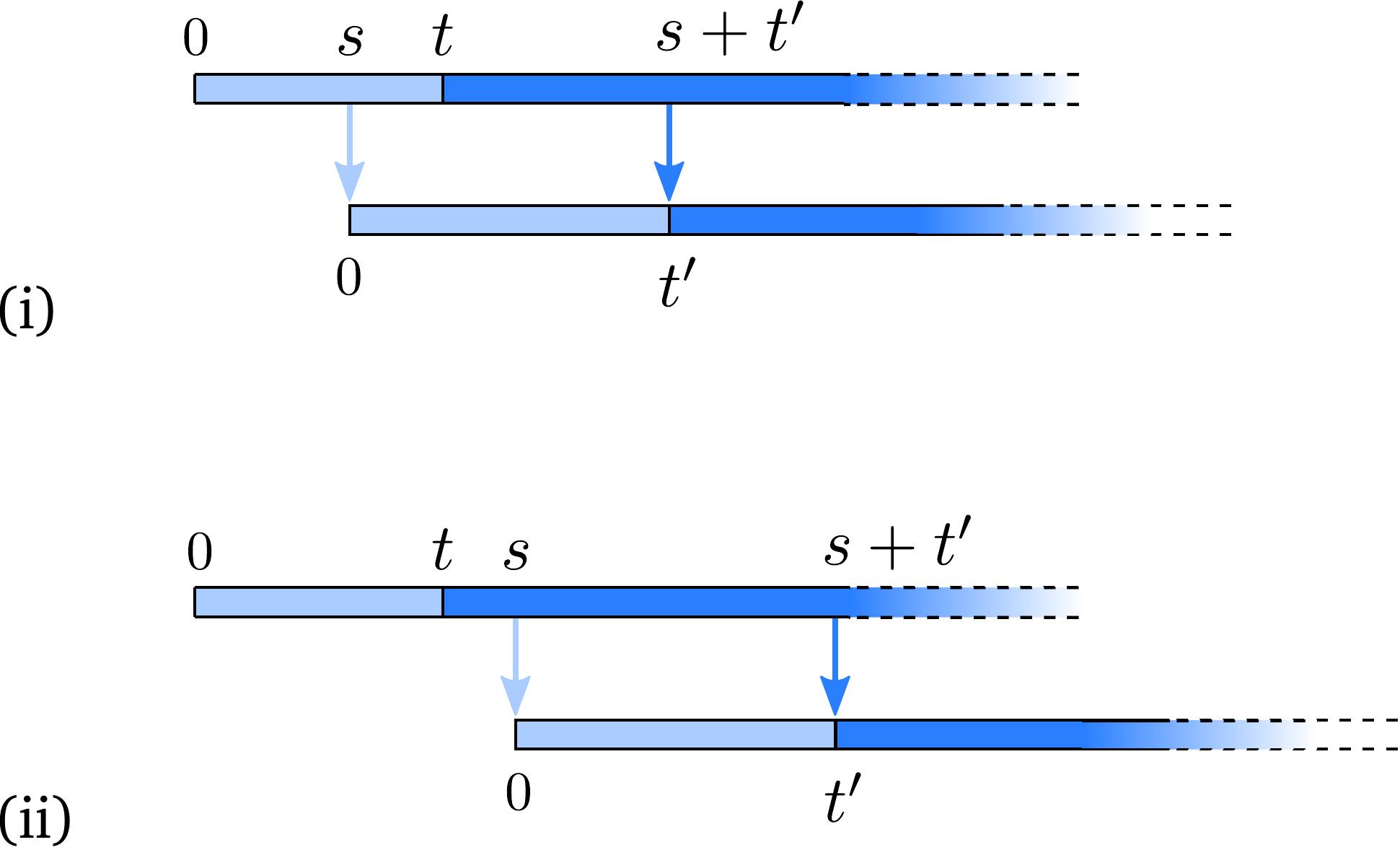}
\caption{Illustration of the timing of the necessary events for the secondary infection to successfully create a type $t'$ offspring from a type $t$ parent; in each case the top line represents the life of the parent and the bottom line that of the offspring. Pale lines denote the transmission of the primary and dark lines denote the transmission of the secondary, similarly, pale/dark regions denote the corresponding status of the nodes. We split into two cases depending on whether the time $s$ of transmission of the primary infection (measured from when it is acquired by the parent) is (i) before, or (ii) after, the time $t$ that parent acquires the secondary infection. }
\label{fig:types}
\end{figure}

Combining the two cases and evaluating the integral gives
\begin{align}
\mu(t'|t)=\left\{\begin{array}{ll}
{\frac{c\beta_{1}\beta_{2}(\beta_{2}e^{-\beta_{1}t-(\beta_{2}+2\rho_{1}+\rho_{2})t'}+(\beta_{1}+\rho_{1}+\rho_{2})e^{-\beta_{1}t-(\rho_{1}-\beta_{1})t'})}{(\beta_{1}+\rho_{1}+\rho_{2})(\beta_{1}+\beta_{2}+\rho_{1}+\rho_{2})} }& \quad\mathrm{if}\ t'\leq t\ \\
& \\
{\frac{c\beta_{1}\beta_{2}(\beta_{2}e^{-\beta_{1}t-(\beta_{2}+2\rho_{1}+\rho_{2})t'}+(\beta_{1}+\rho_{1}+\rho_{2})e^{(\beta_{2}+\rho_{1}+\rho_{2})t-(\beta_{2}+2\rho_{1}+\rho_{2})t'})}{(\beta_{1}+\rho_{1}+\rho_{2})(\beta_{1}+\beta_{2}+\rho_{1}+\rho_{2})}} & \quad\mathrm{if}\ t'>t\,.
\end{array}\right.
\label{mudef}
\end{align}

We are now ready to state our result about the spectrum of the production operator resulting from this kernel.

\begin{lemma}\label{M}
For the integral operator $M$ defined in \eqref{Mdef} with kernel $\mu$ given in \eqref{mudef}, the top eigenvalue $ \lambda $ solves the implicit equation
	\begin{align}\label{eig1}
	\frac{c\beta_{1}\beta_{2}\,{}_{0}F_{1}\left( \frac{\beta_{1}+\beta_{2}+3\rho_{1}+\rho_{2}}{\rho_{1}};-\frac{\beta_{1}\beta_{2}}{(\lambda/c)\rho_{1}^{2}}\right) }{\lambda(\beta_{1}+\rho_{1}+\rho_{2})(\beta_{1}+\beta_{2}+2\rho_{1}+\rho_{2})}={}_{0}F_{1}\left( \frac{\beta_{1}+\beta_{2}+2\rho_{1}+\rho_{2}}{\rho_{1}};-\frac{\beta_{1}\beta_{2}}{(\lambda/c)\rho_{1}^{2}}\right) 
	\end{align}
	where ${}_{0}F_{1}(a;z)=\displaystyle \sum_{k}\frac{z^{k}}{k!(a)_{k}}$ is a hypergeometric function.
\end{lemma}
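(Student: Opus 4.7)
The plan is to reduce the integral eigenvalue equation $\lambda\psi=M[\psi]$ to a second-order linear ODE by two successive differentiations, recognise that ODE as the confluent hypergeometric limit equation whose regular solutions are ${}_0F_1$, and then extract (\ref{eig1}) as the scalar consistency condition coming from the boundary data at $t'=0$.

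First I would write $\lambda\psi(t')=(M\psi)(t')$ out explicitly, splitting the $t$-integration at $t'$ as required by the two cases in (\ref{mudef}). Introducing the abbreviations $B=\beta_1+\rho_1+\rho_2$, $E=\beta_1+\beta_2+\rho_1+\rho_2$, $\gamma=\beta_2+2\rho_1+\rho_2$, $\delta=\rho_1-\beta_1$, $\eta=\beta_2+\rho_1+\rho_2$, the right-hand side turns out to be an exponential linear combination of the three quantities
\[
I=\int_0^\infty\! e^{-\beta_1 t}\psi(t)\,dt,\qquad J(t')=\int_{t'}^\infty\! e^{-\beta_1 t}\psi(t)\,dt,\qquad L(t')=\int_0^{t'}\! e^{\eta t}\psi(t)\,dt.
\]
Setting $\Psi(t'):=e^{\gamma t'}\psi(t')$, the algebraic identity $(\gamma-\delta)-\beta_1=\eta$ produces a cancellation in the first derivative and leaves the clean relation $\lambda\Psi'(t')=c\beta_1\beta_2\,e^{E t'}J(t')$. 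Differentiating once more and using $\eta-\gamma=-\rho_1$ yields
\begin{equation*}
\Psi''(t')-E\,\Psi'(t')+\frac{c\beta_1\beta_2}{\lambda}\,e^{-\rho_1 t'}\,\Psi(t')=0.
\end{equation*}

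Next, the substitution $z=e^{-\rho_1 t'}$ followed by a rescaling converts this into the standard form $yF''(y)+aF'(y)-F(y)=0$ with $a=(\beta_1+\beta_2+2\rho_1+\rho_2)/\rho_1$, whose solution space is spanned by ${}_0F_1(a;y)$ and $y^{1-a}{}_0F_1(2-a;y)$. In the original variables the second branch grows like $e^{E t'}$ at infinity, and plugging this behaviour back into the derived identity $\lambda\Psi'(t')=c\beta_1\beta_2\,e^{E t'}J(t')$ together with $J(\infty)=0$ forces its coefficient to vanish, leaving $\Psi(t')=C\,{}_0F_1(a;-b\,e^{-\rho_1 t'})$ with $b=c\beta_1\beta_2/(\lambda\rho_1^2)$. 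Evaluating the eigenvalue equation and its first derivative at $t'=0$ (where $L(0)=0$ and $J(0)=I$) collapses to the single scalar condition $\Psi'(0)=B\Psi(0)$, and applying the standard identity $\frac{d}{dz}{}_0F_1(a;z)=\frac{1}{a}{}_0F_1(a+1;z)$ then turns this into (\ref{eig1}) after rearrangement.

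That the $\lambda$ so obtained is the top eigenvalue, rather than some sub-dominant one, follows from Lemma \ref{Z}: the Krein--Rutman eigenfunction is the unique non-negative one, and non-negativity of ${}_0F_1(a;-b\,e^{-\rho_1 t'})$ on $[0,\infty)$ picks out the largest $\lambda$ among the roots of the implicit equation. The main obstacle I anticipate is not the calculation itself but verifying that the two differentiations have neither lost information nor introduced spurious solutions; this is handled by checking that the ODE together with the single scalar condition $\Psi'(0)=B\Psi(0)$ and the integrability of $\psi$ at infinity is equivalent to the original integral equation, which in turn rests on the identities $(\gamma-\delta)-\beta_1=\eta$ and $\eta-\gamma=-\rho_1$ that make the two derivatives so clean.
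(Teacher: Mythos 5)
Your derivation is correct and reaches \eqref{eig1} by a genuinely different route from the paper. The paper guesses an eigenfunction ansatz of the form $\psi(t)=e^{-(\beta_2+\rho_1+\rho_2)t}\sum_{k\ge1}a_ke^{-k\rho_1 t}$, substitutes it into $\lambda\psi=M[\psi]$, and equates coefficients to obtain a first-order recursion $a_{k+1}=-ca_kd_k/\lambda$ whose closed-form solution (Pochhammer symbols) turns the consistency condition $\lambda=c\sum_k a_kb_k$ into the hypergeometric ratio \eqref{eig1}. Your approach instead differentiates the integral equation twice, exploiting exactly the exponent identities you record, to arrive at $\Psi''-E\Psi'+\tfrac{c\beta_1\beta_2}{\lambda}e^{-\rho_1t'}\Psi=0$, recognises the ${}_0F_1$ equation after the substitution $y=-be^{-\rho_1t'}$, and imposes boundedness at infinity plus the boundary relation $\Psi'(0)=B\Psi(0)$. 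The two methods produce the same eigenfunction --- the paper's series in $e^{-\rho_1 t}$ is precisely the Taylor expansion of your $e^{-\gamma t}\,{}_0F_1(a;-be^{-\rho_1 t})$ --- but yours derives it rather than guessing it, makes explicit why the second (growing) branch is excluded, and connects directly to the Bessel-function machinery used later in Lemma \ref{Phi}. The one point you rightly flag, equivalence of the boundary-value problem with the integral equation, does go through: for any bounded-branch solution the discrepancy $H=M[\psi]e^{\gamma t'}-\lambda\Psi$ satisfies $H''-EH'=0$, so boundedness forces $H$ constant, and $\Psi'(0)=B\Psi(0)$ then forces $H\equiv0$. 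A minor caveat shared with the paper: identifying the relevant root of \eqref{eig1} as the \emph{top} eigenvalue rests on non-negativity of the eigenfunction via Krein--Rutman, and your remark that non-negativity of ${}_0F_1(a;-be^{-\rho_1t'})$ selects the largest $\lambda$ (equivalently $b$ below the first Bessel zero) is asserted rather than proved --- but the paper is no more explicit on this point, and the lemma as stated only requires that the top eigenvalue solve the equation.
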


\begin{proof}
From part 1 of Lemma \ref{Z}, to determine that $\lambda$ is the top eigenvalue of $M$, it is sufficient to exhibit a non-negative function $\psi$ such that $\lambda \psi=M\psi$. We begin a search for such a function by considering the successive action of $M$ starting from the initial state $\psi_0(t)=\delta_0(t)$, corresponding to a single seed infected individual who acquires the primary and secondary infections at the same instant. Defining the series 
\begin{equation}
\psi_{n+1}=M[\psi_n]\,,
\end{equation}
we observe that each iterate $\psi_n$ is a member of a family, $\Psi$, of functions that can be written as a certain positive sum of exponentials: 
\begin{equation}\label{form}
\Psi=\left\{\psi(t)=e^{-(\beta_{2}+\rho_{1}+\rho_{2})t}\sum_{k\geq 1}a_{k}e^{-k\rho_{1}t}\,:\, a_k\geq 0 \right\}\,.
\end{equation}
We look for an eigenfunction of $M$ that lies in $\Psi$. The eigenvalue equation $\lambda \psi=M[\psi]$ is thus reduced to a statement about the coefficients $\{a_k\}$. Specifically, we find
\begin{align}
\lambda \psi(t)&=\int_\mathbb{T}\mu(t|t')\psi(t')\textrm{d}t'\nonumber\\
&\Downarrow\nonumber\\
\lambda e^{-(\beta_{2}+\rho_{1}+\rho_{2})t}\sum_{k\geq 1}a_{k}e^{-k\rho_{1}t}&=c e^{-(\beta_{2}+\rho_{1}+\rho_{2})t}\sum_{k\geq 1}a_{k}(b_{k}e^{-\rho_{1}t}-d_{k}e^{-(k+1)\rho_{1}t})\,,
\label{eveq}
\end{align}
where 
\begin{align*}
b_{k}=&\frac{\beta_{1}\beta_{2}(\beta_{1}+(k+1)\rho_{1}+\rho_{2})}{k\rho_{1}(\beta_{1}+\rho_{1}+\rho_{2})(\beta_{1}+\beta_{2}+(k+1)\rho_{1}+\rho_{2})} \\
d_{k}=&\displaystyle \frac{\beta_{1}\beta_{2}}{k\rho_{1}(\beta_{1}+\beta_{2}+(k+1)\rho_{1}+\rho_{2})}\,.
\end{align*}
Equating coefficients in \eqref{eveq} determines
\begin{equation}
\lambda=c\sum_{k\geq 1}a_{k}b_{k}
\label{lameq}
\end{equation}
where the $\{a_k\}$ are found to satisfy 
\begin{equation}
a_{k+1}=-\frac{ca_{k}d_{k}}{\lambda}\,.
\end{equation}
This recursive equation specifies a solution up to a multiplicative constant:
\begin{equation}
a_{k}=\left( -\frac{\beta_{1}\beta_{2}}{(\lambda/c)\rho_{1}^{2}}\right) ^{(k-1)}\frac{a_{1}}{(k-1)!((\beta_{1}+\beta_{2}+2\rho_{1}+\rho_{2})/\rho_{1})_{k-1}}\,,
\label{ak}
\end{equation}
where $(\cdots)_k$ denotes the Pochhammer symbol. Combining this result with \eqref{lameq}, yields the implicit equation \eqref{eig1} for $\lambda$ given in the statement.

\raggedleft$\square$
\end{proof}

\subsection{Bounds on the ratio of hypergeometric functions}

Recall that the survival of the primary infection is dependent only on its birth-death ratio $\alpha$ and the connectivity of the underlying graph $c$, while the secondary infection additionally depends on its relative speed when compared primary, $\varphi:=\beta_1/\beta_2$. As per the discussion in Section \ref{branch2}, we specialise to the case that $\beta_1/\rho_1=\beta_2/\rho_2=\alpha$. Then the implicit eigenvalue equation \eqref{eig1} can be rewritten in terms of the parameters $\alpha$ and $\varphi$ to give
\begin{equation}
\frac{c}{\lambda}=\frac{(1+\alpha+\alpha\varphi)(1+\varphi+2\alpha+\alpha\varphi)}{\varphi} \frac{1}{\mathrm{\Phi}_\gamma(c\varphi/\lambda\alpha^2)}\,,
%\label{eig2}
\end{equation}
where $\gamma=(1+\varphi)(1+1/\alpha)$ and $\mathrm{\Phi}$ denotes the hypergeometric ratio
\begin{equation}
\mathrm{\Phi}_a(z):=\frac{_0F_1(a+2;-z)}{_0F_1(a+1;-z)}\,.
\end{equation}
Our strategy to prove the scaling relations claimed in Theorem \ref{th1}, will be to replace this function by suitably simple upper and lower bounds with the same asymptotic behaviour. Fortunately, there is a substantial literature on topic that we may draw on.

\begin{lemma}\label{Phi}
For $a>0$ write $j_{a}$ for the smallest positive root of $J_a$, the Bessel function of the first kind. Then
\begin{equation}
a(a+2)<j_{a}^2<4(a+1)(a+2)\,,
\label{jbounds}
\end{equation}
and for all $z\in(0,j_{a})$ we have 
\begin{equation}
%1+\frac{z}{(a+1)(a+2)}<\mathrm{\Phi}_a(z)<1+\frac{4z}{j_{a}^2-4z}\,.
1<\mathrm{\Phi}_a(z)<1+\frac{4z}{j_{a}^2-4z}\,.
\label{Qbounds}
\end{equation}
\end{lemma}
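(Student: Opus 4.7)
The plan is to exploit the identification
$$_0F_1(a+1;-z) = \Gamma(a+1)\,z^{-a/2}\, J_a(2\sqrt{z}),$$
which locates the zeros of the denominator of $\mathrm{\Phi}_a(z)$ at $z = j_{a,k}^2/4$ for $k \geq 1$, where $\{j_{a,k}\}$ is the increasing sequence of positive zeros of $J_a$. Combining this with the logarithmic derivative of the Hadamard product of $J_a$ produces the Mittag-Leffler expansion
$$\mathrm{\Phi}_a(z) = \frac{(a+1)\,J_{a+1}(2\sqrt{z})}{\sqrt{z}\,J_a(2\sqrt{z})} = 4(a+1)\sum_{k\geq 1} \frac{1}{j_{a,k}^2 - 4z},$$
which is valid for $z \in [0, j_a^2/4)$ (where every summand is positive). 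Evaluating at $z=0$ recovers Rayleigh's classical identity $\sum_k 1/j_{a,k}^2 = 1/[4(a+1)]$, and differentiating at $z=0$ supplies $\sum_k 1/j_{a,k}^4 = 1/[16(a+1)^2(a+2)]$; these two identities are the workhorses of the argument.

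Both inequalities in \eqref{Qbounds} will then follow at once from the expansion. For the lower bound, termwise $1/(j_{a,k}^2 - 4z) > 1/j_{a,k}^2$ whenever $z > 0$, so summing against Rayleigh's first identity gives $\mathrm{\Phi}_a(z) > 1$. For the upper bound, since $j_a = j_{a,1} \leq j_{a,k}$ for all $k$, the termwise bound $1/(j_{a,k}^2 - 4z) \leq 1/[j_{a,k}^2(1 - 4z/j_a^2)]$ sums to $\mathrm{\Phi}_a(z) \leq j_a^2/(j_a^2 - 4z)$, which rewrites as $1 + 4z/(j_a^2 - 4z)$.

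The upper bound in \eqref{jbounds} will come from a related moment inequality. Setting $x_k = 1/j_{a,k}^2$ with $x_1 = \max_k x_k$, one has $x_k^2 \leq x_1 x_k$ and hence (strictly, since the $x_k$ are distinct) $\sum_k x_k^2 < x_1 \sum_k x_k$. This rearranges to $x_1 > \sigma_2/\sigma_1 = 1/[4(a+1)(a+2)]$, i.e., $j_a^2 < 4(a+1)(a+2)$.

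The hard part will be the lower bound $j_a^2 > a(a+2)$. Rayleigh moment bounds of the form $j_a^2 \geq \sigma_n^{-1/n}$ scale only as $\Theta(a^{(2n-1)/n})$ for fixed $n$ and large $a$, so no finite-order bound of this type reaches the required $\Theta(a^2)$ growth uniformly in $a$. I would instead appeal to the classical monotonicity of the map $\nu \mapsto j_\nu - \nu$ (Watson, \emph{Theory of Bessel Functions}, \S15.3, and subsequent work of Elbert and Laforgia), which gives $j_a - a \geq j_0 > 2$ for all $a \geq 0$; squaring then produces $j_a^2 > (a+2)^2 > a(a+2)$ as required.
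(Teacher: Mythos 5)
Your proof is correct but takes a genuinely different route from the paper. The paper's proof is a bare citation: the two bounds in \eqref{jbounds} are quoted from formulas (6.7) and (6.22) of Ismail and Muldoon, and the bounds in \eqref{Qbounds} from formulas (1.2) and (2.17) of Ifantis and Siafarikas, following the same Bessel identification $\mathrm{\Phi}_a(z)=\tfrac{2(a+1)}{x}J_{a+1}(x)/J_a(x)$ with $x=2\sqrt{z}$ that you write down. You, by contrast, actually derive three of the four inequalities from the classical partial-fraction expansion $\mathrm{\Phi}_a(z)=4(a+1)\sum_k(j_{a,k}^2-4z)^{-1}$ together with the first two Rayleigh sums: the lower bound $\mathrm{\Phi}_a>1$ and upper bound $\mathrm{\Phi}_a<1+4z/(j_a^2-4z)$ drop out termwise (and the strictness is correctly supplied by the terms $k\geq 2$), and $j_a^2<4(a+1)(a+2)$ follows from $\sum_k x_k^2<x_1\sum_k x_k$ with $x_k=j_{a,k}^{-2}$. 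This is in effect a self-contained reproof of the Ifantis--Siafarikas inequalities, which is more illuminating than the paper's citation. What this buys you is transparency; what it costs is that you must still defer to the literature for the one bound that the Euler--Rayleigh machinery cannot reach, namely $j_a^2>a(a+2)$. Your observation that $\sigma_n^{-1/n}=\mathrm{\Theta}(a^{2-1/n})$ never attains the required $\mathrm{\Theta}(a^2)$ growth is correct and is a genuine insight into why a different tool is needed there. Your proposed fix, invoking the monotonicity of $\nu\mapsto j_\nu-\nu$ (equivalently $dj_\nu/d\nu>1$) to get $j_a>a+j_0>a+2$ and hence $j_a^2>(a+2)^2>a(a+2)$, works, though the correct home for that monotonicity result is Elbert and Laforgia rather than Watson \S15.3, which establishes only positivity of $dj_\nu/d\nu$. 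One further small point, affecting the lemma statement rather than your argument: the expansion and the upper bound on $\mathrm{\Phi}_a$ are meaningful on $z\in(0,j_a^2/4)$, not $(0,j_a)$ as written; for small $a$ these intervals differ, and your derivation correctly produces the former.
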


\begin{proof}
Ismail and Muldoon \cite{Ismail1988} list many different bounds on $j_{a}$, including those in \eqref{jbounds} coming from formulas (6.7) and (6.22) in that article. For the second part, it is well-known \cite{Abramowitz1970} that the Bessel functions of the first kind may be expressed as
\begin{align*}
J_a(x)=\frac{(x/2)^{a}}{\Gamma(a+1)}\,{}_0F_1(a+1;-x^2/4)\,,
\end{align*}
hence, introducing $x=2\sqrt{z}$, we obtain
\begin{align}
\mathrm{\Phi}_a(z)=\frac{2(a+1)}{x}\frac{J_{a+1}(x)}{J_{a}(x)}\,.
\label{QJ}
\end{align}
This function has previously been studied by Ifantis and Siafarikas \cite{Ifantis1990}, who proved various inequalities including their formulas (1.2) and (2.17) which imply the lower and upper bounds of \eqref{Qbounds}. 

\raggedleft$\square$
\end{proof}

\subsection{Proof of Theorem 1}
\begin{proof}
As argued previously, in the limit of large Erd\H{o}s-R\'{e}nyi random graphs with mean degree $c$, the survival probability of the secondary infection coincides with that of a multi-type branching process $\{Z_n\}$ with production kernel given by equation \eqref{mudef}. From Lemma~\ref{M} and Theorem \ref{Z} we establish that $Z_n$ has non-zero probability to survive indefinitely if and only if $\lambda^\star>1$, where $\lambda^\star$ is the largest real number satisfying
\begin{align}\label{eig2}
\frac{\beta_{1}\beta_{2}{}_{0}F_{1}\left( \frac{\beta_{1}+\beta_{2}+3\rho_{1}+\rho_{2}}{\rho_{1}};-\frac{\beta_{1}\beta_{2}}{(\lambda^\star/c)\rho_{1}^{2}}\right) }{(\lambda^\star/c)(\beta_{1}+\rho_{1}+\rho_{2})(\beta_{1}+\beta_{2}+2\rho_{1}+\rho_{2})}={}_{0}F_{1}\left( \frac{\beta_{1}+\beta_{2}+2\rho_{1}+\rho_{2}}{\rho_{1}};-\frac{\beta_{1}\beta_{2}}{(\lambda^\star/c)\rho_{1}^{2}}\right)\,.
\end{align}
Noticing that $\lambda^\star$ appears only in ratio with $c$, it follows that the condition for the possibility of survival may be rewritten in terms of the critical connectivity $c^\star$ such that for $c>c^\star$ we have $\lambda^\star>1$. Rearranging equation \eqref{eig2} we straightforwardly find that $c^\star$ is the smallest positive solution to 
\begin{equation}
c^\star=\frac{(1+\alpha+\alpha\varphi)(1+\varphi+2\alpha+\alpha\varphi)}{\varphi} \frac{1}{\mathrm{\Phi}_\gamma(c^\star\varphi/\alpha^2)}\,,
\label{cs}
\end{equation}
which is precisely equation \eqref{eig0}, as required. 

To quantify the scaling behaviour of $c^\star$ for large and small $\varphi$, we recall the definition of ``big theta'' notation: $f(x)=\mathrm{\Theta}(g(x))$ as $x\to \infty$ (resp. $x\to 0$) if there exist positive constants $L$ and $U$ and $X$ such that $\forall x>X$ (resp. $x<X$) we have $$L g(x)< f(x) <U g(x)\,.$$

Two sufficient conditions are easy to check: $f(x)=\mathrm{\Theta}(g(x))$ if either
\begin{enumerate}
\item[(i)] $f(x)/g(x)$ has a positive finite limit, or
\item[(ii)] there exist functions $l(x),u(x)=\mathrm{\Theta}(g(x))$ such that $l(x)<f(x)<u(x)$.
\end{enumerate}

We will use the bounds in Lemma \ref{Phi} to exhibit functions with appropriate finite limits that sandwich $c^\star$. Specifically, recalling $\gamma=(1+\varphi)(1+1/\alpha)$, let
\begin{align}
u(\varphi)&= \frac{1}{\varphi}(1+\alpha+\alpha\varphi)(1+\varphi+2\alpha+\alpha\varphi)\,,\\
l(\varphi)&= u(\varphi)\left(1-\frac{4\varphi u(\varphi)}{\gamma(\gamma+2)\alpha^2+4\varphi u(\varphi)}\right)\,.
\label{lu}
\end{align}

First we check the upper bound. From \eqref{cs} and the lower bound of unity in equation \eqref{Qbounds} of Lemma \ref{Phi}, we have that 
\begin{equation}
c^\star=\frac{u(\varphi)}{\mathrm{\Phi}_\gamma(c^\star\varphi/\alpha^2)}<u(\varphi)\,.
\end{equation}
For the lower bound, we note that the upper bound on $\mathrm{\Phi}$ given in Lemma \ref{Phi} implies a lower bound on $c^\star$ as the smallest positive $l^\star$ satisfying the equation
\begin{equation}
l^\star=u(\varphi)\left(1+\frac{4 l^\star\varphi/\alpha^2 }{j_{\gamma}^2-4l^\star\varphi/\alpha^2}\right)^{-1}\,.
\end{equation}
In fact there is only one solution:
\begin{equation}
l^\star=u(\varphi)\left(1-\frac{4\varphi u(\varphi)}{j^2_\gamma\alpha^2+4\varphi u(\varphi)}\right)\,.
\end{equation}
The lower bound $l(\varphi)<c^\star$ given in \eqref{lu} follows immediately from this and the lower bound on $j_\gamma^2$ given in equation \eqref{jbounds} of Lemma \ref{Phi}.

It remains to check that the upper and lower bounds both have the desired scaling in large and small $\varphi$. We begin with $u(\varphi)$, which has easily determined limits
\begin{equation}
\lim_{\varphi\to0}\varphi\, u(\varphi) = (1+\alpha)(1+2\alpha)\,,\quad \lim_{\varphi\to\infty}\frac{u(\varphi)}{\varphi} = \alpha(1+\alpha)\,,
\label{uphi}
\end{equation}
both of which are finite and positive, implying $u(\varphi)=\mathrm{\Theta}(\varphi)$ for large $\varphi$ and $u(\varphi)=\mathrm{\Theta}(1/\varphi)$ for small $\varphi$. For the lower bound we use these results to obtain  
\begin{equation}
1-\frac{4\varphi u(\varphi)}{\gamma(\gamma+2)\alpha^2+4\varphi u(\varphi)}\to \frac{1+3\alpha}{1+\alpha(8\alpha^2+4\alpha+3)}\in(0,\infty) \quad\textrm{as} \quad \varphi\to0\,,
\end{equation}
and 
\begin{equation}
1-\frac{4\varphi u(\varphi)}{\gamma(\gamma+2)\alpha^2+4\varphi u(\varphi)}\to \frac{1+\alpha}{1+\alpha+4\alpha^3}\in(0,\infty) \quad\textrm{as} \quad \varphi\to\infty\,.
\end{equation}
It follows from the defintion of $l(\varphi)$ and finiteness of these limits that $l(\varphi)$ has the same scaling form as $u(\varphi)$ for both large and small arguments. Since $u$ and $l$ sandwich $c^\star$, the desired scaling is confirmed. 

\raggedleft$\square$
\end{proof}

\section{Discussion}
Theorem 1 provides an exact but implicit formula for the region in which survival of the secondary infection is possible (in the limit of infinitely large graphs), and establishes the scaling behaviour of the boundary of this region for large and small values of the parameter $\varphi=\beta_1/\beta_2$ which controls the relative timescales of the two infections. Knowledge of this scaling behaviour is enough to prove that, for fixed $\alpha$ and $c$, the survival of the secondary infection is confined to a bounded region of $\varphi$ values --- this is the reentrant phase transition of our title. Figure \ref{fig:networkvsbranching7084} shows the results of numerical simulations of both the branching process and the network model to illustrate this phenomenon. 
\begin{figure}
\centering
\includegraphics[width=0.8\linewidth]{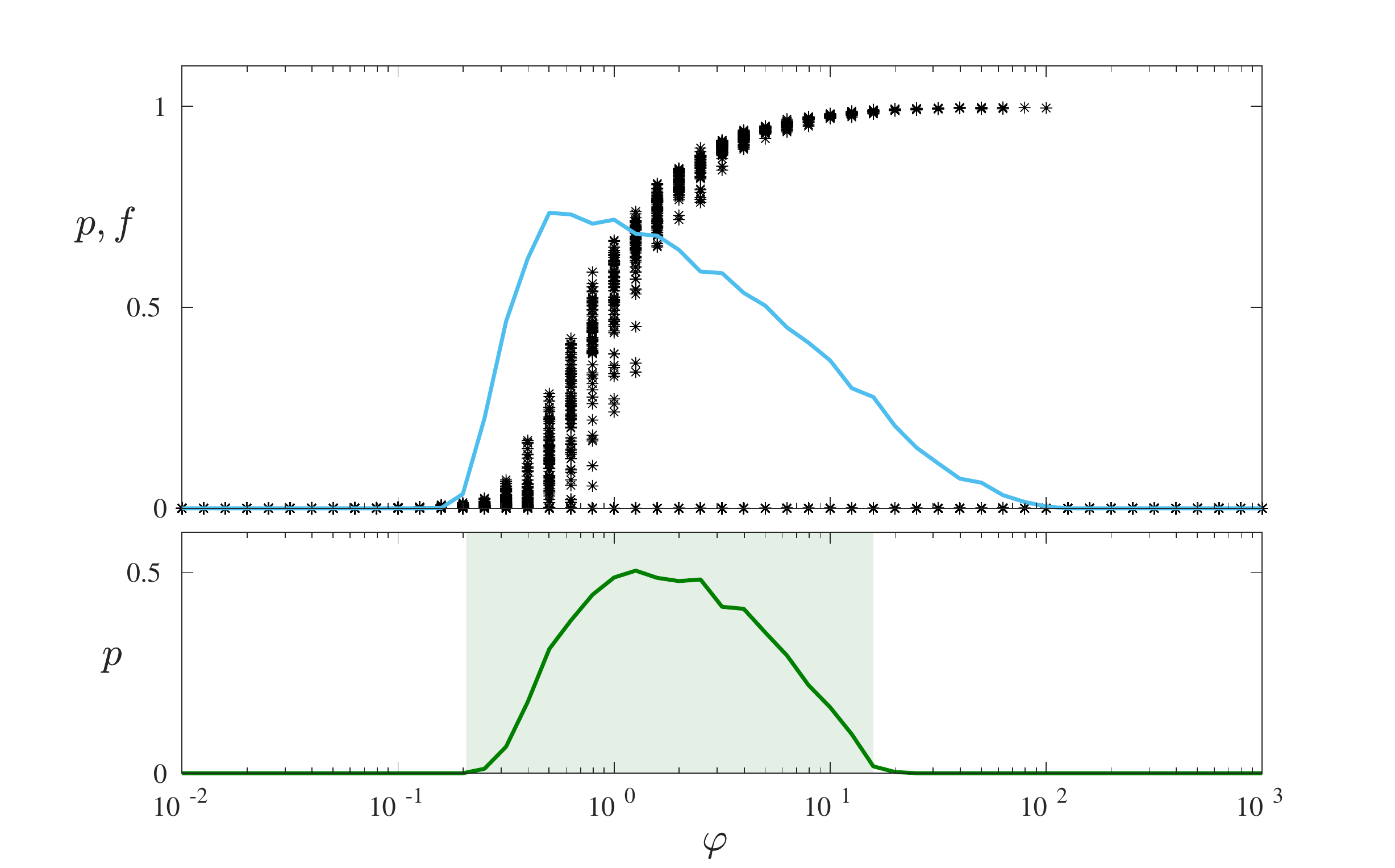}
\caption{The top panel shows the fractional size of outbreaks (f, stars) and the probability of an outbreak of size >100 (p, blue curve) \textcolor{black}{of the secondary infection,} measured from 1000 simulations of ER networks with mean degree c=10 and N=10000 nodes. The bottom panel shows on the same scale the theoretical survival region of the branching process (pale green box) and the probability of the branching process to reach size >100 (p, green curve), measured from 1000 simulations of the branching process.}
\label{fig:networkvsbranching7084}
\end{figure}

It is interesting to note that the simulations of the network process and the limiting branching process are not in perfect agreement. Viewing the mean outbreak size over 1000 runs of the model we see in Figure \ref{fig:networkvsbranching7084} that, while we have agreement with the branching process for small values of $ \varphi $, large outbreaks still seem to be possible beyond the point predicted by the branching process. Moreover, considering the individual simulation results it seems that this unexpected tail is comprised of a few  very large outbreaks; while outbreaks of any size are rare for large values of $ \varphi $, when they do happen they reach most of the graph. By considering the infection spread in a closed connected community we start to encounter finite size effects. Recall that the branching approximation is only valid when the number of infected is relatively small compared to the size of the graph. As the outbreak becomes large the approximation breaks down, a problem exacerbated by the two levels of infection we study. Furthermore in a more highly connected environment we may have the existence of transmission routes for the secondary infection to primary infected cousins as well as direct descendants allowing opportunity for the secondary infection to progress before direct primary progression. Similar finite size scaling effects have been observed in other coevolving infection models; see \cite{Cai2015a} for example. 

Comparing the average outbreak size with individual realisations demonstrates an interesting choice of risk vs reward in the strategy of a secondary infection\textcolor{black}{, due to the different locations of the maxima of the curves shown in the top panel of Figure 4. The values of $\varphi$ for which outbreaks are most likely to occur (blue curve) are in the lower end of the survival window, corresponding to smaller total outbreak sizes (black stars). Conversely, larger values of $ \varphi $ have potential for much larger outbreaks, but come with a higher risk of rapid extinction.} Looking at this another way, in nature we should expect \textcolor{black}{survival probability to be a strongly selected characteristic, and hence to find that the majority of secondary infections reach} only a minority of primary hosts. 

The work presented here could easily be extended to a host of other random graph models, for example by building on techniques of \cite{doi:10.1137/1.9781611970333,bhamidi2017,10.2307/2667184}. It may also be interesting to explore the application of the model (or variants) to other areas, including: the successive invasions of different species necessary to rebuild a diverse ecosystem in a damaged habitat; the evolution of hyperparasitism (that is, parasites that live on other parasites); radicalisation, and the incremental spread of increasingly extreme political views through social media.

\begin{figure}
	\centering
	\includegraphics[width=0.8\linewidth]{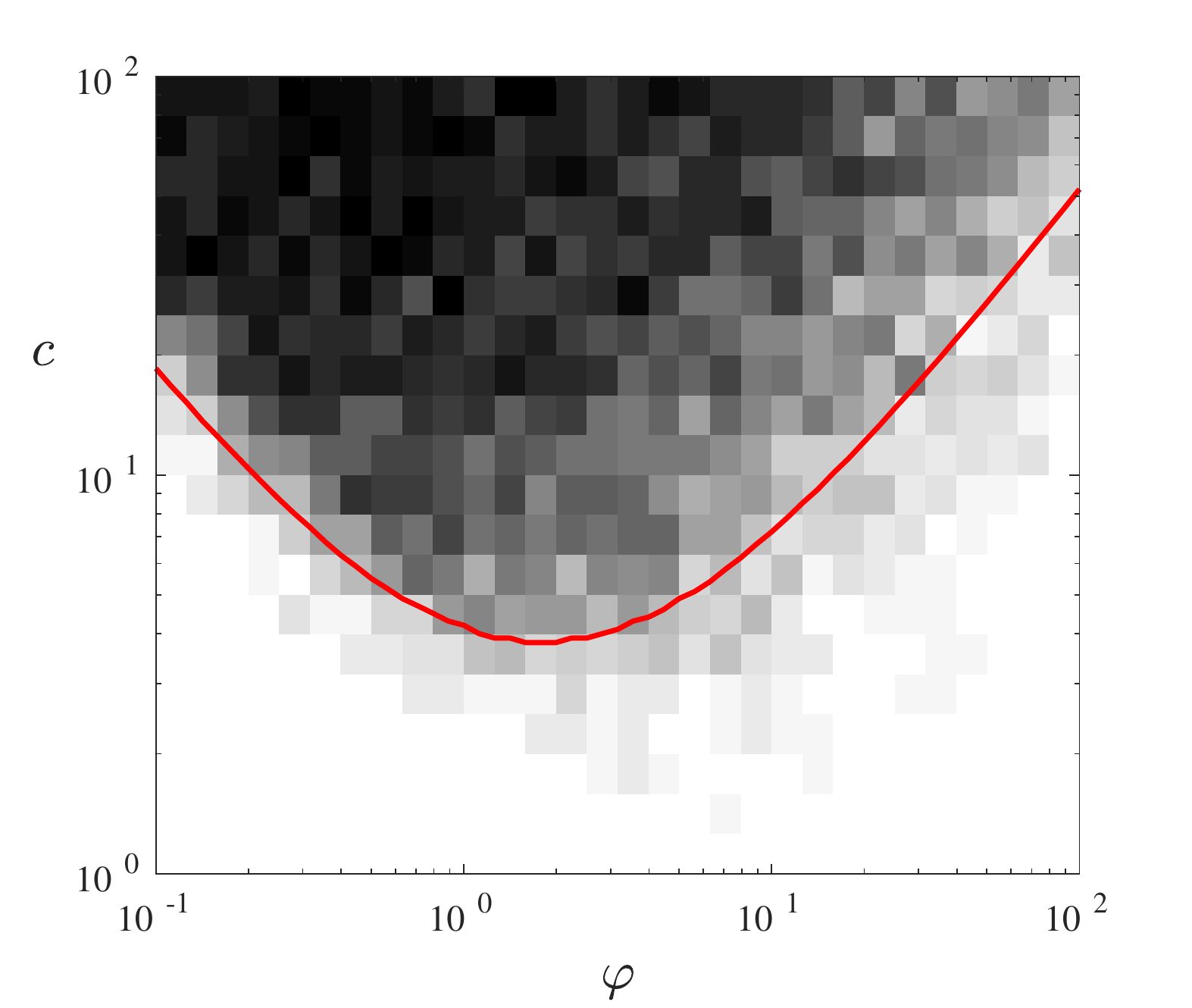}
	\caption{The density plot shows the probability (estimated as a fraction of 25 simulations per pixel) of an outbreak of size >100, starting from a single infected node, in an ER network of 10000 nodes. The red line is the boundary of the region where $\lambda>1$. }
	\label{fig:outbreakprobabilitysimn5000s257085}
\end{figure}

\newpage
%\bibliographystyle{spmpsci}
%\bibliography{RPT}

\end{document}